   \DeclareMathOperator{\e}{e}
\def\cprime{$'$}
\newtheorem{theorem}{Theorem}[section]
\newtheorem{definition}[theorem]{Definition}
\newtheorem{corollary}[theorem]{Corollary}
\newtheorem{lemma}[theorem]{Lemma}
\newtheorem{remark}[theorem]{Remark}
\newtheorem{example}[theorem]{Example}
\newcommand{\prts}[1]{\left(#1\right)}
\newcommand{\prtsr}[1]{\left[#1\right]}
\newcommand{\pfrac}[2]{\prts{\dfrac{#1}{#2}}}
\newcommand{\eps}{\varepsilon}
\newcommand{\sgm}{\sigma}
\title{Integral conditions for nonuniform $\mu$-dichotomy on the half-line}
\author{Ant\'{o}nio J. G. Bento}
\address{Ant\'{o}nio J. G. Bento, Departamento de Matem\'{a}tica, Universidade da Beira Interior, 6201-001 Covilh\~{a}, Portugal}
\email{bento@ubi.pt}
\author{Nicolae Lupa}
\address{Nicolae Lupa, Department of Mathematics, "Politehnica" University of Timi\c soara, Victoriei Square 2, 300006 Timi\c soara, Romania}
\email{nicolae.lupa@upt.ro}
\author{Mihail Megan}
\address{Mihail Megan, Academy of Romanian Scientists, Independen\c tei 54, 050094 Bucharest, Romania}
\email{megan@math.uvt.ro}
\author{C\'{e}sar M. Silva}
\address{C\'{e}sar M. Silva, Departamento de Matem\'{a}tica, Universidade da Beira Interior, 6201-001 Covilh\~{a}, Portugal}
\email{csilva@ubi.pt}
\begin{document}

\begin{abstract}
We give necessary integral  conditions and sufficient ones for the existence of a general concept of $\mu$-dichotomy for evolution operators defined on the half-line which includes as particular cases the well-known concepts of nonuniform exponential dichotomy and nonuniform polynomial dichotomy,  and also contains new situations.
Additionally, we consider an adapted notion of Lyapunov function and use our results to obtain necessary and sufficient conditions for the existence of nonuniform $\mu$-dichotomies using these Lyapunov functions.
\end{abstract}

\subjclass[2010]{47D06,34D09}
\keywords{Evolution operators, nonuniform dichotomies, Datko theorem}

\maketitle

\section{Introduction}

The notion of exponential dichotomy is a fundamental tool in the study of stability of difference and differential equations and can be traced back to the work of Perron~\cite{Pe.1930} on the stability of ordinary differential equations, and of Li~\cite{Li.1934} for discrete time systems. We also refer to the book of Chicone and Latushkin
\cite{Ch.La.1999} for  important results in infinite-dimensional spaces and the book of P\"otzsche \cite{Pot.2010} for nonautonomous discrete systems.

In some situations, in particular for nonautonomous systems, the classical concept of (uniform) exponential dichotomy is too restrictive and it is important to look for more general hyperbolic behavior.
We can identify, at least, two ways to generalize this concept: allow some loss of hyperbolicity along the trajectories, a path leading to notions similar to Pesin's nonuniform hyperbolicity~\cite{Pe.1976,Pe.1977-1,Pe.1977-2}, and consider asymptotic behavior that is not necessarily exponential, an approach followed by Naulin and Pinto in \cite{Na.Pi.1995,Pi.1994}, where the authors considered uniform dichotomies with asymptotic behavior given by general growth rates.
In recent years, a large number of papers study different aspects of the dynamical
behavior of systems
with nonuniform exponential dichotomies, a type of dichotomic behavior where some exponential loss of hyperbolicity along the trajectories is allowed (see for example the work of Barreira and Valls~\cite{Ba.Va.2008-1} and papers \cite{Lu.Me.2014,Pr.Pr.Cr.2012,Sa.Ba.Sa.2013}).
Also,  several results  were obtained in \cite{Ba.Me.Po,Ba.Ch.Va.2013,Ba.Va.2008-2,Be.Si.,Be.Si.2013,Ch} for general nonuniform behavior and in \cite{Ba.Va.2009,Be.Si.2009,Be.Si.2012}
for nonuniform polynomial behavior.

One of the most important results in the stability theory of evolution operators is due to Datko \cite{Da.1972} which has given an integral characterization of uniform exponential stability.  This characterization is used to obtain a necessary and sufficient condition for uniform exponential stability in terms of Lyapunov functions. Preda and Megan have extended Datko theorem to uniform exponential dichotomy \cite{Pr.Me.1985}. Generalizations of this result in the case of nonuniform exponential dichotomy are given in \cite{Lu.Me.2014,Me.1996,Pr.Pr.Cr.2012}. For more details and history about Datko theorem we refer the reader to \cite{Sa.2010-1}.

In this paper, we consider a notion of dichotomy which is both nonuniform and not necessarily exponential in the general context of evolution operators defined on the half-line, with the purpose of obtaining necessary conditions and sufficient ones in the spirit of Datko's results. Additionally, we use our Datko type theorem to obtain necessary and sufficient conditions for the existence of nonuniform $\mu$-dichotomies in terms of suitable Lyapunov functions.

We note that the use of Lyapunov functions in the study of (uniform) exponential stability has a long story that goes back to the seminal work of Lyapunov~\cite{Ly.1892}. Corresponding results for exponential dichotomies were first considered by Ma{\u\i}zel{\cprime} \cite{Ma.1954}. We refer the book~\cite{Mi.Sa.Ku.2003} for the relation between (uniform) exponential dichotomies and Lyapunov functions. For stability results via Lyapunov functions in the context of delay equations, we refer the work of Hatvani and collaborators~\cite{Bu.Ha.1989, Bu.Ha.1990, Ha.2002}.

 In the context of nonuniform exponential dichotomies, Megan and Bu\c se discussed in \cite{Me.Bu.1993} the relation between Lyapunov functions and existence of dichotomies. Recently, for a different concept of nonuniform exponential dichotomy, Barreira and Valls~\cite{Ba.Va.2009-1} used quadratic Lyapunov functions, that are Lyapunov functions obtained from quadratic forms,
 to establish the existence of nonuniform exponential dichotomies. In a more recent work~\cite{Ba.Ch.Va.2013}, the authors gave a complete characterization of the existence of a general type of nonuniform dichotomy (it is considered the case of different growth rates for the uniform and nonuniform part of the dichotomy) in terms of suitable Lyapunov functions, but the results are restricted to the case of nonautonomous linear differential equations in finite-dimensional spaces. We point out that none of the results in~\cite{Ba.Ch.Va.2013,Ba.Va.2009-1} and in our paper imply the results in the other.

 We emphasize that the dichotomies considered in this paper include as particular cases the  concepts of nonuniform exponential dichotomy and nonuniform polynomial dichotomy. We show that our results extend previous theorems  and also contain new situations. We remark that, by allowing growth rates that are not exponential, we are considering situations where the classical Lyapunov exponents can be zero. Moreover, note that we do not need to assume the invertibility of the evolution operators on the whole space, which allow us to apply our results to compact operators defined in infinite-dimensional spaces.

\section{Notions and preliminaries}

Let $X$ be a Banach space and let $\mathcal{B}(X)$ be the Banach algebra of all
bounded linear operators on $X$.
Throughout this paper $\mathbb{R}^{+}_{0}$  denotes the set of non-negative real numbers and $\mathbb{N}^*$ is assumed to be the set of positive integers. We also consider  $\Delta$  the set defined by
$$\Delta=\left\{(t,s) \in \mathbb{R}^{+}_{0} \times \mathbb{R}^{+}_{0} \colon t \geq s\right\}.$$

We first recall the definition of the evolution operators:
\begin{definition}\label{d.ev} \rm
An operator valued function  $U:\Delta\to\mathcal{B}(X)$ is said to be  an  \emph{evolution operator}  if
\begin{enumerate}
\item $U(t,t)=\mathrm{Id}$  for every $t\geq 0$;

\item $U(t,\tau)U(\tau,s)=U(t,s)$ for all $ t\geq \tau\geq s\geq 0$;

\item $(t,s)\mapsto U(t,s)x$ is continuous for every $x\in X$.
\end{enumerate}
\end{definition}

\begin{definition}\label{d.rate}\rm
We say that an increasing  function $\mu : \mathbb{R}^{+}_{0} \to [1,+\infty)$ is a \emph{growth rate} if $\mu(0)=1$ and  $\lim\limits_{t\to +\infty} \mu(t) = +\infty$.
\end{definition}

Examples of growth rates are $e^t$, $t+1$, $t+\sqrt{t^2+1}$, $\dfrac{\log(e^t+1)}{\log{2}}$ etc.

\begin{lemma}\label{lem:eq}
Let $\mu : \mathbb{R}^{+}_{0} \to [1,+\infty)$ be a differentiable growth rate and $K>0$. The following statements are equivalent:
      \begin{enumerate}
         \item[\rm(i)] $\sup\limits_{t \geq 0} \dfrac{\mu'(t)}{\mu(t)} \leq K$;
         \item[\rm(ii)] $\mu(t) \leq \mu(t_0) e^{K(t-t_0)}$ for every $t \geq t_0 \geq 0$;
         \item[\rm(iii)] $\sup\limits_{t \geq 0} \dfrac{\mu(t+\delta)}{\mu(t)} \leq e^{K \delta}$ for every $\delta > 0$.
      \end{enumerate}
\end{lemma}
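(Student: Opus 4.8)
The plan is to establish the cyclic chain of implications (i) $\Rightarrow$ (ii) $\Rightarrow$ (iii) $\Rightarrow$ (i), using throughout that $\mu$ is positive (since $\mu \geq 1$) and differentiable, so that $t \mapsto \ln \mu(t)$ is well defined and differentiable with derivative $\mu'(t)/\mu(t)$.

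For (i) $\Rightarrow$ (ii), I would simply integrate: the hypothesis says $(\ln \mu)'(t) = \mu'(t)/\mu(t) \leq K$ for all $t \geq 0$, so integrating from $t_0$ to $t$ gives $\ln \mu(t) - \ln \mu(t_0) \leq K(t-t_0)$, and exponentiating yields (ii). For (ii) $\Rightarrow$ (iii), I would apply (ii) with $t_0$ replaced by $t$ and $t$ replaced by $t+\delta$, obtaining $\mu(t+\delta) \leq \mu(t)\, e^{K\delta}$ for every $t \geq 0$ and every $\delta > 0$; dividing by $\mu(t) > 0$ and taking the supremum over $t \geq 0$ gives (iii).

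For (iii) $\Rightarrow$ (i), I would fix $t \geq 0$. From (iii) we have $\mu(t+\delta) \leq \mu(t)\, e^{K\delta}$ for every $\delta > 0$, hence
$$\frac{\mu(t+\delta)-\mu(t)}{\delta} \leq \mu(t)\,\frac{e^{K\delta}-1}{\delta}.$$
Letting $\delta \to 0^{+}$ and using that $\mu$ is differentiable at $t$, the left-hand side tends to $\mu'(t)$ while the right-hand side tends to $K\mu(t)$; therefore $\mu'(t) \leq K\mu(t)$, i.e. $\mu'(t)/\mu(t) \leq K$. Taking the supremum over $t \geq 0$ gives (i).

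There is no genuine obstacle here; the only point requiring a word of justification is the passage to the limit in the last implication, but this is routine precisely because $\mu$ is assumed differentiable, so the one-sided difference quotient converges to $\mu'(t)$. The argument uses nothing about $\mu$ beyond positivity and differentiability, so the monotonicity and the normalizations $\mu(0)=1$, $\mu(t)\to+\infty$ in Definition~\ref{d.rate} play no role.
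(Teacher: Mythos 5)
Your argument is correct and follows the same route as the paper: integrate the logarithmic derivative for (i) $\Rightarrow$ (ii), substitute for (ii) $\Rightarrow$ (iii), and use the one-sided difference quotient together with differentiability for (iii) $\Rightarrow$ (i).
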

\begin{proof}
(i) $\Rightarrow$ (ii). Let $t\geq t_0\geq 0$ and $\tau\in[t_0,t]$. We have that
$\dfrac{\mu'(\tau)}{\mu(\tau)}\leq K,$
which is equivalent to
$$\dfrac{d}{d\tau} \log \mu(\tau)\leq K, \,\tau\in[t_0,t].$$
Integrating from $t_0$ to $t$ in the last inequality, we deduce that
$$\log \mu(t)-\log \mu(t_0)\leq K (t-t_0).$$
This implies that (ii) holds. Implication (ii) $\Rightarrow$ (iii) is straightforward.\\
(iii) $\Rightarrow$ (i). Let $t\geq 0$. By (iii) we have
\begin{align*}
\dfrac{\dfrac{\mu(t+\delta)-\mu(t)}{\delta}}{\mu(t)}&=\dfrac{\dfrac{\mu(t+\delta)}{\mu(t)}-1}{\delta}
\leq \dfrac{e^{K\delta}-1}{\delta},
\end{align*}
for all  $\delta>0$.
Letting $\delta\to 0$ in the relation above and using the fact that $\mu$ is a differentiable function, we obtain
$$\dfrac{\mu'(t)}{\mu(t)} \leq K, \text{ for all } t\geq 0,$$
and hence (i) holds.
\end{proof}

For example, for the growth rates considered above, relation (i) in Lemma \ref{lem:eq} is fulfilled.

\begin{definition}\label{d.proj}\rm
A strongly continuous function $P:\mathbb{R}^{+}_{0}\to\mathcal{B}(X)$ is called a \emph{projection valued function} if $$P^2(t)=P(t) \text{ for every } t\geq 0. $$
\end{definition}
Given a projection valued function $P:\mathbb{R}^{+}_{0}\to\mathcal{B}(X)$, we denote by $Q$ the complementary projection valued function, that is $Q(t)=\mathrm{Id}-P(t)$ for every $t\geq 0$.

\begin{definition}\label{d.comp}\rm
We say that a projection valued function $P:\mathbb{R}^{+}_{0}\to\mathcal{B}(X)$ is \emph{compatible} with an evolution operator $U:\Delta\to\mathcal{B}(X)$ if, for all $t\geq s\geq 0$, we have
\begin{enumerate}
\item $P(t)U(t,s)=U(t,s)P(s)$;

\item the restriction $U(t,s)|_{Q(s)X}:Q(s)X\to Q(t)X$ is an isomorphism and we denote its inverse by $U_{Q}(s,t)$.
\end{enumerate}
\end{definition}

\begin{remark}\label{UQ}
If $P:\mathbb{R}^{+}_{0}\to\mathcal{B}(X)$ is a projection valued function compatible with an evolution operator $U:\Delta\to\mathcal{B}(X)$, then for every $(t,s)\in \Delta$, it follows
\begin{enumerate}
\item[\rm(i)]  $U(t,s)U_{Q}(s,t)Q(t)=Q(t);$

\item[\rm(ii)] $U_{Q}(s,t)U(t,s)Q(s)=Q(s).$
\end{enumerate}
Moreover, for all $t\geq \tau\geq s\geq 0$, we have that
$U_Q(s,\tau)U_Q(\tau,t)=U_Q(s,t).$
\end{remark}

\begin{definition}\label{d.dich}\rm
Given a growth rate $\mu: \mathbb{R}^{+}_{0} \to [1,+\infty)$ and a projection valued function $P:\mathbb{R}^{+}_{0}\to\mathcal{B}(X)$ compatible with an evolution operator $U:\Delta\to\mathcal{B}(X)$, we say that $U$ has a \emph{nonuniform $\mu$-dichotomy} with projection valued function $P$ if there exist constants $a,b>0$,  $\varepsilon\geq 0$ and $N_1, N_2\geq 1$ such that, for all $t\geq s\geq 0$, we have
\begin{enumerate}
\item  $ \|U(t,s)P(s)\| \leq N_1 \left(\dfrac{\mu(t)}{\mu(s)}\right)^{-a} \mu(s)^{\varepsilon};$

\item  $ \|U_{Q}(s,t)Q(t)\| \leq N_2 \left(\dfrac{\mu(t)}{\mu(s)}\right)^{-b} \mu(t)^{\varepsilon}.$
\end{enumerate}

When $\varepsilon=0$, we say that $U$ has a \emph{uniform $\mu$-dichotomy} with projection valued function $P$.
\end{definition}

In the following we consider particular cases of the notion of nonuniform $\mu$-dichotomy:
\begin{enumerate}
\item if $\mu(t)=e^t$, then we recover the notion of nonuniform exponential dichotomy (in the sense of Barreira-Valls) \cite{Ba.Va.2008-1} and in particular (when $\varepsilon=0$) the classical notion of uniform exponential dichotomy;

\item if $\mu(t)=t+1$, then we recover the notion of nonuniform polynomial dichotomy  \cite{Ba.Va.2009,Be.Si.2009,Be.Si.2012}.
\end{enumerate}

More generally, we consider an evolution operator which has a nonuniform $\mu$-dichotomy with a projection valued function $P$, for an arbitrary continuous growth rate $\mu$:
\begin{example}\label{ex1}
Let $\mu: \mathbb{R}^{+}_{0} \to [1,+\infty)$
be a continuous growth rate and  $\varepsilon\geq 0$ be a non-negative real number. On  $X=\mathbb{R}^2$ endowed with the norm $\|(x_1,x_2)\|=\max\{|x_1|,|x_2|\},$ we consider the projection valued function
$$P(t)(x_1,x_2)=(x_1+(\mu(t)^{\varepsilon}-1)x_2,0)$$
and its complementary projection valued function
$$Q(t)(x_1,x_2)=((1-\mu(t)^{\varepsilon})x_2,x_2).$$
Obviously, we have that
\begin{equation}\label{eq:norm}
\|P(t)\|=\mu(t)^{\varepsilon} \text{ and } \|Q(t)\|= \max\{\mu(t)^{\varepsilon}-1,1\}\leq\mu(t)^{ \varepsilon}.
\end{equation}
Given $a,b>0$, we consider the evolution operator $U:\Delta\to\mathcal{B}(\mathbb{R}^2)$, $$U(t,s)=\left(\dfrac{\mu(t)}{\mu(s)}\right)^{-a}P(s)+\left(\dfrac{\mu(t)}{\mu(s)}\right)^{b} Q(t).$$
Since  $P(t)P(s)=P(s)$, $Q(t)Q(s)=Q(t)$ and $Q(t)P(s)=0$, we have that $P$ is a projection valued function compatible with $U$. Moreover, it follows that
$$U(t,s)P(s)=\left(\dfrac{\mu(t)}{\mu(s)}\right)^{-a}P(s)
\text{ and }
U_{Q}(s,t)Q(t)=\left(\dfrac{\mu(t)}{\mu(s)}\right)^{-b} Q(s).$$
By \eqref{eq:norm} and using the relations above, we deduce that
$$\|U(t,s)P(s)\|=\left(\dfrac{\mu(t)}{\mu(s)}\right)^{-a}\mu(s)^{\varepsilon}$$
 and
$$\|U_{Q}(s,t)Q(t)\|\leq\left(\dfrac{\mu(t)}{\mu(s)}\right)^{-b}\mu(s)^{\varepsilon}\leq \left(\dfrac{\mu(t)}{\mu(s)}\right)^{-b}\mu(t)^{\varepsilon},$$
for  $t\geq s\geq 0$, which shows that the evolution operator $U$ has a nonuniform $\mu$-dichotomy with projection valued function $P$.
Let now $\varepsilon>0$ and assume that $U$ has a uniform $\mu$-dichotomy with projection valued function $P$. Then there exist $\nu>0$ and $N\geq 1$ such that
\begin{equation*}\label{eq:unif.st}
\|U(t,s)P(s)\|\leq N \left(\dfrac{\mu(t)}{\mu(s)}\right)^{-\nu}, \text{ for all } t\geq s\geq 0,
\end{equation*}
which is equivalent to
\begin{equation}\label{eq:unif.st1}
\left(\dfrac{\mu(t)}{\mu(s)}\right)^{-a}\mu(s)^{ \varepsilon}\leq N \left(\dfrac{\mu(t)}{\mu(s)}\right)^{-\nu}, \text{ for all } t\geq s\geq 0.
\end{equation}
Setting $t=s$ in~\eqref{eq:unif.st1} we have
   $$ \mu(s)^{ \varepsilon} \leq N, \text{ for all } s\geq 0,$$
and this is absurd because $\lim\limits_{t \to +\infty} \mu(t) = + \infty$.
Therefore,  when $\varepsilon > 0$  the evolution operator $U$ does not have a uniform $\mu$-dichotomy with  projection valued function $P$.
\end{example}

Now, we provide an example of a linear differential equation that generates an evolution operator which admits a $\mu$-dichotomy.

\begin{example}
   Given a growth rate $\mu$ and $a\in \mathbb{R}$,  $\varepsilon\geq 0$, consider the functions
   \begin{equation*}
      \nu_{a,\eps}(t)
      = a \,\dfrac{\mu'(t)}{\mu(t)}
         + \dfrac{\eps}{2} \dfrac{\mu'(t)}{\mu(t)}\prts{\cos t - 1}
         - \dfrac{\eps}{2} \log\prts{\mu(t)} \sin t
   \end{equation*}
   and
   \begin{equation*}
      \sgm_{a,\eps}(t)
      = \mu(t)^a \e^{\eps\prtsr{\log\prts{\mu(t)} \prts{\cos t - 1}}/2}.
   \end{equation*}
   It is clear that for $t, s \ge 0$,
   \begin{equation} \label{ineq:smg_(a,eps)(t)/smg_(a,eps)(s)_le_...}
      \dfrac{\sgm_{a, \eps}(t)}{\sgm_{a,\eps}(s)}
      \le
         \pfrac{\mu(t)}{\mu(s)}^a \mu(s)^\eps.
   \end{equation}

   In $\ell_\infty=\{x=(x_1,x_2,x_3,\ldots,x_n,\ldots):\, \sup\limits_{n\in\mathbb{N}^*} |x_n|<+\infty\}$ consider the linear differential equation
   \begin{equation}\label{eq:v'(t)=A(t)v(t)}
      v'(t) = A(t) v(t)
   \end{equation}
   where $A(t) \colon \ell_\infty \to \ell_\infty$ is the operator given by
   \begin{align*}
      & A(t)\prts{x_1, x_2, x_3, \ldots , x_n, \ldots}\\
      & = \prts{\nu_{a_1,\eps_1}(t) \, x_1, \nu_{a_2,\eps_2}(t) \, x_2,
         \nu_{a_3,\eps_3}(t) \, x_3, \ldots, \nu_{a_n,\eps_n}(t) \, x_n, \ldots},
   \end{align*}
  with $\prts{a_n}_{n \in \mathbb{N}^*}$ and $\prts{\eps_n}_{n \in \mathbb{N}^*}$ be two bounded real sequences such that
   \begin{equation*}
      a_{2n} > 0, \ \ \
      a_{2n-1} < 0, \ \ \
      \eps_n \ge 0, \text{  for every $n \in \mathbb{N}^*$,}
   \end{equation*}
   and
   \begin{equation*}
      \sup_{n \in \mathbb{N}^*} a_{2n-1} < 0,
      \ \ \
      \inf_{n \in \mathbb{N}^*} a_{2n} > 0.
   \end{equation*}

   The evolution operator of equation~\eqref{eq:v'(t)=A(t)v(t)} is given by
   \begin{align*}
      & U(t,s) \prts{x_1, x_2, x_3, \ldots , x_n, \ldots}\\
      & = \prts{
         \dfrac{\sgm_{a_1,\eps_1}(t)}{\sgm_{a_1,\eps_1}(s)} \, x_1, \dfrac{\sgm_{a_2,\eps_2}(t)}{\sgm_{a_2,\eps_2}(s)} \, x_2, \dfrac{\sgm_{a_3,\eps_3}(t)}{\sgm_{a_3,\eps_3}(s)} \, x_3,
         \ldots,
         \dfrac{\sgm_{a_n,\eps_n}(t)}{\sgm_{a_n,\eps_n}(s)} \, x_n,
         \ldots}.
   \end{align*}
   With the projection $P(t) \colon \ell_\infty \to \ell_\infty$ defined by
   \begin{equation*}
      P(t)\prts{x_1, x_2, x_3, x_4, x_5, x_6, \ldots}
      = \prts{x_1, 0, x_3, 0, x_5, 0, \ldots},
   \end{equation*}
   it follows  from~\eqref{ineq:smg_(a,eps)(t)/smg_(a,eps)(s)_le_...} that
   \begin{align*}
      \|U(t,s)P(s)\|
      & = \sup_{n \in \mathbb{N}^*} \dfrac{\sgm_{a_{2n-1},\eps_{2n-1}}(t)}
         {\sgm_{a_{2n-1},\eps_{2n-1}}(s)}\\
      & \le \sup_{n \in \mathbb{N}^*} \pfrac{\mu(t)}{\mu(s)}^{a_{2n-1}}
         \mu(s)^{\eps_{2n-1}}\\
      & = \pfrac{\mu(t)}{\mu(s)}^{-a} \mu(s)^\eps,
   \end{align*}
   where $a = -\sup\limits_{n \in \mathbb{N}^*} a_{2n-1}$ and $\eps = \sup\limits_{n \in \mathbb{N}^*} \eps_n$, and
\begin{align*}
      \|U_Q(s,t)Q(t)\|
      & = \sup_{n \in \mathbb{N}^*} \dfrac{\sgm_{a_{2n},\eps_{2n}}(s)}
         {\sgm_{a_{2n},\eps_{2n}}(t)}\\
      & \le \sup_{n \in \mathbb{N}^*} \pfrac{\mu(s)}{\mu(t)}^{a_{2n}}
         \mu(t)^{\eps_{2n}}\\
      & = \pfrac{\mu(t)}{\mu(s)}^{-b} \mu(t)^\eps,
\end{align*}
with $\displaystyle b = \inf_{n \in \mathbb{N}^{*}} a_{2n}$. Clearly, the projection valued function $P$ is compatible with the evolution operator $U$ and thus the linear equation~\eqref{eq:v'(t)=A(t)v(t)} admits a $\mu$-dichotomy with projection valued function $P$.
\end{example}

\black

\section{The main results}

For a given evolution operator $U:\Delta\to\mathcal{B}(X)$ and a projection valued function $P:\mathbb{R}^{+}_{0}\to\mathcal{B}(X)$ compatible with $U$, we denote the \emph{Green function} associated to the evolution operator $U$ and the projection valued function $P$ compatible with $U$ by
\[
G(t,s):=\begin{cases}
\phantom{-}U(t,s)P(s), &\text{for } t> s\geq 0,\\
-U_{Q}(t,s)Q(s),       &\text{for } s> t\geq 0.
\end{cases}
\]

The following theorem gives a necessary condition for the existence of nonuniform $\mu$-dichotomy with a  projection valued function $P$.
 \begin{theorem} \label{thm:dicho=>ine_int}
      Let $p>0$ and $\mu: \mathbb{R}^{+}_{0} \to [1,+\infty)$ be a differentiable growth rate. If the evolution operator $U:\Delta\to\mathcal{B}(X)$ has a nonuniform $\mu$-dichotomy with a dichotomy projection valued function $P:\mathbb{R}^{+}_{0}\to\mathcal{B}(X)$, then for every constant $\gamma < \min\{a,b\}$ it follows that
      \begin{equation}\label{eq:1}
         \int_0^{+\infty} \dfrac{\mu'(\tau)}{\mu(\tau)}
            \left(\dfrac{\mu(\tau)}{\mu(t)}\right)^{p \gamma  \, \mathrm{sign}(\tau-t)} \|G(\tau,t)x\|^p d\tau
         \leq D \mu(t)^{p \varepsilon} \| x\|^p,
      \end{equation}
      for  every $(t,x)\in \mathbb{R}^{+}_{0}\times X$, where $a,b>0$,  $\varepsilon\geq 0$ and $N_1, N_2\geq 1$ are given by Definition \ref{d.dich}, and
 \begin{equation}\label{eq:D}
 D=\dfrac{N_1^p}{p(a-\gamma)}+\cfrac{N_2^p}{p(b-\gamma)} \,\cdot
 \end{equation}

 \end{theorem}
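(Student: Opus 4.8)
The plan is to split the integral in \eqref{eq:1} at the point $\tau=t$ and to estimate the two resulting pieces separately, using the two inequalities in Definition~\ref{d.dich}; after the bound on $\|G(\tau,t)x\|$ is inserted, each piece reduces to an elementary integral via the substitution $u=\mu(\tau)$, which is legitimate because $\mu$ is differentiable. So fix $(t,x)\in\mathbb{R}^{+}_{0}\times X$ and a constant $0<\gamma<\min\{a,b\}$.

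On the half-line $\tau>t$ we have $\mathrm{sign}(\tau-t)=1$ and $G(\tau,t)=U(\tau,t)P(t)$, so condition~(1) of Definition~\ref{d.dich} gives $\|G(\tau,t)x\|\le N_1\left(\mu(\tau)/\mu(t)\right)^{-a}\mu(t)^{\varepsilon}\|x\|$. Hence
\[
\int_t^{+\infty}\dfrac{\mu'(\tau)}{\mu(\tau)}\left(\dfrac{\mu(\tau)}{\mu(t)}\right)^{p\gamma}\|G(\tau,t)x\|^p\,d\tau
\le N_1^p\,\mu(t)^{p\varepsilon}\|x\|^p\int_t^{+\infty}\dfrac{\mu'(\tau)}{\mu(\tau)}\left(\dfrac{\mu(\tau)}{\mu(t)}\right)^{p(\gamma-a)}d\tau .
\]
The substitution $u=\mu(\tau)$ turns the remaining integral into $\mu(t)^{-p(\gamma-a)}\int_{\mu(t)}^{+\infty}u^{p(\gamma-a)-1}\,du$ (using that $\mu(\tau)\to+\infty$), which converges precisely because $\gamma<a$ — this is the only place where that hypothesis enters — and equals $1/\bigl(p(a-\gamma)\bigr)$. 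Thus the piece over $\tau>t$ is at most $\dfrac{N_1^p}{p(a-\gamma)}\,\mu(t)^{p\varepsilon}\|x\|^p$.

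On $0\le\tau<t$ we have $\mathrm{sign}(\tau-t)=-1$ and $G(\tau,t)=-U_Q(\tau,t)Q(t)$, so condition~(2) of Definition~\ref{d.dich} gives $\|G(\tau,t)x\|\le N_2\left(\mu(\tau)/\mu(t)\right)^{b}\mu(t)^{\varepsilon}\|x\|$, whence this piece is bounded by $N_2^p\,\mu(t)^{p\varepsilon}\|x\|^p\int_0^t\frac{\mu'(\tau)}{\mu(\tau)}\left(\mu(\tau)/\mu(t)\right)^{p(b-\gamma)}d\tau$. The same substitution, now with lower limit $u=\mu(0)=1$, gives $\mu(t)^{-p(b-\gamma)}\int_1^{\mu(t)}u^{p(b-\gamma)-1}\,du=\dfrac{\mu(t)^{p(b-\gamma)}-1}{p(b-\gamma)\,\mu(t)^{p(b-\gamma)}}\le\dfrac{1}{p(b-\gamma)}$, using $\gamma<b$ and $\mu(t)\ge1$. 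So this piece is at most $\dfrac{N_2^p}{p(b-\gamma)}\,\mu(t)^{p\varepsilon}\|x\|^p$, and adding the two contributions yields \eqref{eq:1} with $D$ exactly as in \eqref{eq:D}.

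I do not anticipate a genuine obstacle here; the argument is essentially bookkeeping around the change of variables. The only points that require a word of care are: (i) the convergence of the improper integral at $+\infty$, which is exactly what forces $\gamma<a$; (ii) keeping the correct limits of integration after the substitution, in particular that $\mu(0)=1$ produces the harmless $-1$ term that we simply discard (and that $\mu(t)^{-p(b-\gamma)}\le 1$ since $\mu(t)\ge1$); and (iii) observing that the value of the integrand on the negligible set $\{\tau=t\}$, where $\mathrm{sign}(0)=0$, is irrelevant, so splitting the integral at $\tau=t$ is legitimate.
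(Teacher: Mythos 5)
Your proof is correct and follows essentially the same route as the paper's: split the integral at $\tau=t$, insert the two dichotomy bounds, and evaluate the resulting elementary integrals of the form $\int\mu'(\tau)\mu(\tau)^{c-1}\,d\tau$ (the paper does this directly by recognizing the antiderivative $\mu(\tau)^c/c$, while you phrase it as the substitution $u=\mu(\tau)$, which is the same computation). The remarks you append about convergence requiring $\gamma<a$, about $\mu(0)=1$ and discarding the harmless $-1$ term, and about the measure-zero set $\{\tau=t\}$ are all accurate.
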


    \begin{proof}
     For $(t,x)\in \mathbb{R}^{+}_{0}\times X$ and  $\gamma < \min\{a,b\}$, we have
      \begin{align*}
       \int_0^{+\infty} &\dfrac{\mu'(\tau)}{\mu(\tau)}
            \left(\dfrac{\mu(\tau)}{\mu(t)}\right)^{p \gamma \,\mathrm{sign}(\tau-t)} \|G(\tau,t)x\|^p d\tau\\
          & =   \int_t^{+\infty} \dfrac{\mu'(\tau)}{\mu(\tau)}
            \left(\dfrac{\mu(\tau)}{\mu(t)}\right)^{p \gamma} \|U(\tau,t)P(t)x\|^p d\tau\\
          &\quad +  \int_0^{t} \dfrac{\mu'(\tau)}{\mu(\tau)}
            \left(\dfrac{\mu(t)}{\mu(\tau)}\right)^{p \gamma} \|U_{Q}(\tau,t)Q(t)x\|^p d\tau \\
          & \leq  N_1^p \mu(t)^{p \varepsilon}\mu(t)^{p(a-\gamma)}\int_t^{+\infty}  \mu(\tau)^{-p(a-\gamma)-1}\mu'(\tau) d\tau \, \|x\|^p\\
          &\quad +  N_2^p \mu(t)^{p \varepsilon}\mu(t)^{-p(b-\gamma)}\int_0^{t} \mu(\tau)^{p(b-\gamma)-1}\mu'(\tau)  d\tau \, \|x\|^p\\
          & \leq D \mu(t)^{p \varepsilon} \|x \|^p,
      \end{align*}
      and this proves the result.
   \end{proof}

The next result is a partial converse of Theorem \ref{thm:dicho=>ine_int}. It can be considered a Datko type theorem for the existence of nonuniform $\mu$-dichotomy.
\begin{theorem}\label{Th.Datko.s}
      Let $\mu : \mathbb{R}^{+}_{0} \to [1,+\infty)$  be a differentiable growth rate such that
      \begin{equation}\label{eq:bound}
         K_\mu := \sup\limits_{t \geq 0} \dfrac{\mu'(t)}{\mu(t)} < + \infty.
      \end{equation}
      Assume that $U:\Delta\to\mathcal{B}(X)$ is an evolution operator and $P:\mathbb{R}^{+}_{0}\to\mathcal{B}(X)$ is a projection valued function compatible with $U$ such that
      \begin{equation}\label{eq:growth}
      \| G(t,s) \|\leq M \dfrac{\mu'(s)}{\mu(s)} \left(\dfrac{\mu(t)}{\mu(s)}\right)^{\omega \, \mathrm{sign} (t-s)} \mu(s)^\alpha, \,
       t,s\geq 0, \, t\neq s,
      \end{equation}
      for some  $\omega>0$, $\alpha\geq 0$ and $M\geq 1$.
      If~\eqref{eq:1} holds for some $p,D \geq 1$, $\gamma > \alpha$ and $\varepsilon\geq 0$, then $U$ has a nonuniform $\mu$-dichotomy with projection valued function $P$.
   \end{theorem}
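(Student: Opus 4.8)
The plan is to run a Datko-type argument: starting from the integral inequality~\eqref{eq:1}, I would extract pointwise decay for $U(t,s)P(s)$ and for $U_{Q}(s,t)Q(t)$ by transporting the norm from a variable intermediate instant $\tau$ (using the growth bound~\eqref{eq:growth}) and then integrating in $\tau$ against a weight designed to reproduce, up to a harmless factor, the integrand of~\eqref{eq:1}. Two preliminary remarks are needed. First, from Definition~\ref{d.comp} and Remark~\ref{UQ} one has $U(t,s)P(s)=U(t,\tau)P(\tau)\,U(\tau,s)P(s)$ and $U_{Q}(s,t)Q(t)=U_{Q}(s,\tau)Q(\tau)\,U_{Q}(\tau,t)Q(t)$, so that for $s<\tau<t$
\[
\|U(t,s)P(s)x\|\le \|G(t,\tau)\|\,\|U(\tau,s)P(s)x\|,\qquad
\|U_{Q}(s,t)Q(t)x\|\le \|G(s,\tau)\|\,\|U_{Q}(\tau,t)Q(t)x\|.
\]
Second, since $p\ge1$ and $\mu'(\tau)/\mu(\tau)\le K_\mu$ by~\eqref{eq:bound}, one has $\bigl(\mu'(\tau)/\mu(\tau)\bigr)^{p}\le K_\mu^{\,p-1}\,\mu'(\tau)/\mu(\tau)$; this is what makes the $p$-th power of~\eqref{eq:growth} integrable against $d\tau$.

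\emph{The stable estimate.} Fix $t>s$. Inserting~\eqref{eq:growth} into the first inequality above, raising to the power $p$ and using the last remark gives, for $s<\tau<t$,
\[
\|U(t,s)P(s)x\|^{p}\le M^{p}K_\mu^{\,p-1}\,\frac{\mu'(\tau)}{\mu(\tau)}\Bigl(\frac{\mu(t)}{\mu(\tau)}\Bigr)^{p\omega}\mu(\tau)^{p\alpha}\,\|U(\tau,s)P(s)x\|^{p}.
\]
Multiplying both sides by $h(\tau)=\frac{\mu'(\tau)}{\mu(\tau)}\bigl(\frac{\mu(\tau)}{\mu(t)}\bigr)^{p\omega}\mu(\tau)^{-p\alpha}\bigl(\frac{\mu(\tau)}{\mu(s)}\bigr)^{p\gamma}$ — chosen precisely so that the growth factors cancel — the right-hand side becomes $M^{p}K_\mu^{\,p-1}\frac{\mu'(\tau)}{\mu(\tau)}$ times the integrand of~\eqref{eq:1}. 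Integrating over $(s,t)$, using $\mu'/\mu\le K_\mu$ once more, and applying~\eqref{eq:1} at the point $s$ (only the nonnegative $\tau>s$ part), the right-hand side is at most $M^{p}K_\mu^{\,p}D\,\mu(s)^{p\varepsilon}\|x\|^{p}$. On the left one gets $\|U(t,s)P(s)x\|^{p}\int_{s}^{t}h$, and the substitution $v=\mu(\tau)$ gives the closed form
\[
\int_{s}^{t}h(\tau)\,d\tau=\frac{1}{p(\omega-\alpha+\gamma)}\Bigl[\mu(t)^{p(\gamma-\alpha)}\mu(s)^{-p\gamma}-\mu(t)^{-p\omega}\mu(s)^{p(\omega-\alpha)}\Bigr].
\]
When $\mu(t)/\mu(s)$ is above an explicit threshold the subtracted term is at most half of the first, so $\int_{s}^{t}h\ge c\,\mu(t)^{p(\gamma-\alpha)}\mu(s)^{-p\gamma}$ and hence $\|U(t,s)P(s)x\|\le N_{1}\bigl(\mu(t)/\mu(s)\bigr)^{-(\gamma-\alpha)}\mu(s)^{\alpha+\varepsilon}\|x\|$. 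For $\mu(t)/\mu(s)$ below that threshold — and for $t=s$, by letting $t\to s^{+}$, which also bounds $\|P(s)\|$ — \eqref{eq:growth} together with $\mu'/\mu\le K_\mu$ yields a bound of the same form directly, since there $\bigl(\mu(t)/\mu(s)\bigr)^{\omega}$ and $\bigl(\mu(t)/\mu(s)\bigr)^{-(\gamma-\alpha)}$ differ only by a bounded factor.

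\emph{The unstable estimate, and conclusion.} This is the mirror image: fix $t>s$, combine the second inequality above with~\eqref{eq:growth} (whose sign now produces a factor $\bigl(\mu(\tau)/\mu(s)\bigr)^{\omega}$), multiply by $h_{2}(\tau)=\frac{\mu'(\tau)}{\mu(\tau)}\bigl(\frac{\mu(s)}{\mu(\tau)}\bigr)^{p\omega}\mu(\tau)^{-p\alpha}\bigl(\frac{\mu(t)}{\mu(\tau)}\bigr)^{p\gamma}$, integrate over $(s,t)$ and apply~\eqref{eq:1} at the point $t$ (the $\tau<t$ part). Now $\int_{s}^{t}h_{2}=\frac{1}{p(\omega+\alpha+\gamma)}\bigl[\mu(s)^{-p(\alpha+\gamma)}\mu(t)^{p\gamma}-\mu(s)^{p\omega}\mu(t)^{-p(\omega+\alpha)}\bigr]$, which is bounded below by half its leading term once $\mu(t)/\mu(s)$ exceeds an explicit threshold; together with~\eqref{eq:growth} in the complementary range (and $\|Q(s)\|\le 1+\|P(s)\|$ for $t=s$) this gives the estimate for $U_{Q}(s,t)Q(t)$. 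Collecting constants and using $\mu(s)\le\mu(t)$ to absorb residual powers of $\mu(s)^{\alpha}$, we conclude that $U$ has a nonuniform $\mu$-dichotomy with, for instance, $a=b=\gamma-\alpha>0$ and $\varepsilon'=\alpha+\varepsilon$, for suitable $N_{1},N_{2}\ge1$.

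\emph{Main difficulty.} The crux is the choice of the weights $h$ and $h_{2}$: they must be picked so that multiplying the $p$-th power of~\eqref{eq:growth} by them leaves under the integral exactly the integrand of~\eqref{eq:1} (up to the factor $\mu'/\mu\le K_\mu$), and one must then check that $\int_{s}^{t}h$, which does tend to $0$ as $t\to s^{+}$, is nonetheless bounded below by the required $\mu$-power as soon as $\mu(t)/\mu(s)$ is large enough. This is what forces the two-regime splitting and is exactly the place where the hypothesis $\gamma>\alpha$ is used, guaranteeing that the decay exponent $\gamma-\alpha$ is positive.
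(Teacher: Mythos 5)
Your argument is correct and reaches the same conclusion via the same overall strategy as the paper — transport the norm from an intermediate time $\tau$ through the Green-function growth bound \eqref{eq:growth}, integrate in $\tau$, and invoke the Datko inequality \eqref{eq:1} — but the technical execution is genuinely different. The paper writes $\|U(t,s)P(s)x\|^p=\int_{t-1}^{t}\|U(t,s)P(s)x\|^p\,d\tau$, inserts the growth bound inside the integral, and uses Lemma~\ref{lem:eq}(iii) to bound the ratios $\mu(t)^{p\alpha}/\mu(\tau)^{p\alpha}$ and $(\mu(t)/\mu(\tau))^{p(\omega+\gamma)}$ by constants over the unit interval $[t-1,t]$, splitting into $t\ge s+1$ versus $s\le t<s+1$; the weight is constant and its integral is trivially $1$. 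You instead multiply the transported bound by a variable weight $h(\tau)$ chosen so that the growth factors cancel exactly, integrate over $(s,t)$, and compute $\int_s^t h$ in closed form, splitting not by $t-s$ but by whether $\mu(t)/\mu(s)$ exceeds an explicit threshold so that the leading term of $\int_s^t h$ dominates. Your route avoids Lemma~\ref{lem:eq} entirely (you only need the raw bound $\mu'/\mu\le K_\mu$), at the cost of the closed-form integral and the threshold argument; the paper's route uses Lemma~\ref{lem:eq} as a black box and needs no computation of an integral, at the cost of a unit-interval split that is slightly less intrinsic to $\mu$. Both yield the same rates $a=\gamma-\alpha$, $b\ge\gamma-\alpha$ and nonuniformity exponent $\varepsilon+\alpha$. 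Two small points worth keeping: (1) since \eqref{eq:growth} is stated for $t\neq s$, the $t=s$ endpoint does indeed need the limiting argument you give to bound $\|P(s)\|$ and $\|Q(s)\|$ — the paper leaves this implicit; (2) in the unstable case your bound actually carries an extra harmless factor $\mu(s)^{-p\alpha}\le 1$, which you correctly discard when normalizing to $a=b=\gamma-\alpha$ and $\varepsilon'=\varepsilon+\alpha$.
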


   \begin{proof}
     Let $x \in X$.  By ~\eqref{eq:bound},~\eqref{eq:growth} and Lemma \ref{lem:eq}, if $t \geq s+1$, we have
       \begin{equation}\label{eq:maj-1}
         \begin{split}
            & \|U(t,s)P(s)x\|^p
               = \int_{t-1}^t \|U(t,s)P(s)x\|^p d\tau \\
            & \leq M^p \int_{t-1}^t  \left(\dfrac{\mu'(\tau)}{\mu(\tau)}\right)^p
               \left(\dfrac{\mu(t)}{\mu(\tau)}\right)^{p \omega}
               \mu(\tau)^{p \alpha} \|U(\tau,s)P(s)x\|^p d\tau \\
            & \leq M^p K_\mu^{p-1}\mu(t)^{p \alpha}\\
              &\quad \times\int_{t-1}^t \dfrac{\mu'(\tau)}{\mu(\tau)} \left(\dfrac{\mu(t)}{\mu(\tau)}\right)^{p \omega}
                \left(\dfrac{\mu(\tau)}{\mu(s)}\right)^{-p \gamma} \left(\dfrac{\mu(\tau)}{\mu(s)}\right)^{p \gamma} \|U(\tau,s)P(s)x\|^p d\tau  \\
            & = M^p K_\mu^{p-1}\mu(t)^{p \alpha} \left(\dfrac{\mu(t)}{\mu(s)}\right)^{-p \gamma}\\
              &\quad \times\int_{t-1}^t \dfrac{\mu'(\tau)}{\mu(\tau)} \left(\dfrac{\mu(t)}{\mu(\tau)}\right)^{p(\omega+\gamma)}
               \left(\dfrac{\mu(\tau)}{\mu(s)}\right)^{p \gamma} \|U(\tau,s)P(s)x\|^p d\tau  \\
            & \leq M^p K_\mu^{p-1}e^{K_\mu(\omega+\gamma)p} \mu(t)^{p \alpha} \left(\dfrac{\mu(t)}{\mu(s)}\right)^{-p \gamma}\\
              &\quad \times\int_{t-1}^t \dfrac{\mu'(\tau)}{\mu(\tau)}
               \left(\dfrac{\mu(\tau)}{\mu(s)}\right)^{p \gamma} \|U(\tau,s)P(s)x\|^p d\tau  \\
            & \leq M^p K_\mu^{p-1} e^{K_\mu(\omega+\gamma)p} \mu(t)^{p \alpha} \left(\dfrac{\mu(t)}{\mu(s)}\right)^{-p \gamma}\\
              &\quad \times\int_{s}^{\infty} \dfrac{\mu'(\tau)}{\mu(\tau)}
               \left(\dfrac{\mu(\tau)}{\mu(s)}\right)^{p \gamma} \|U(\tau,s)P(s)x\|^p d\tau  \\
            & \leq D M^p K_\mu^{p-1} e^{K_\mu(\omega+\gamma)p} \mu(t)^{p \alpha}
               \left(\dfrac{\mu(t)}{\mu(s)}\right)^{-p \gamma} \mu(s)^{p \varepsilon} \| x\|^{p}\\
            & = D M^p K_\mu^{p-1} e^{K_\mu(\omega+\gamma)p}
               \left(\dfrac{\mu(t)}{\mu(s)}\right)^{-p(\gamma-\alpha)} \mu(s)^{p(\varepsilon+\alpha)} \| x\|^{p}
         \end{split}
      \end{equation}
      and, if $s \leq t < s+1$, we get
      \begin{equation}\label{eq:maj-2}
         \begin{split}
            \|U(t,s)P(s)x\|
            & \leq M \dfrac{\mu'(s)}{\mu(s)} \left(\dfrac{\mu(t)}{\mu(s)}\right)^\omega
               \mu(s)^\alpha \|x\|\\
            & \leq M K_\mu e^{K_\mu(\omega + \gamma -\alpha)}
               \left(\dfrac{\mu(t)}{\mu(s)}\right)^{-(\gamma-\alpha)}
               \mu(s)^{\varepsilon+\alpha} \|x\|.
         \end{split}
      \end{equation}
On the other hand, for $t \geq s+1$ we have
       \begin{equation}\label{eq:maj-3}
         \begin{split}
            & \|U_{Q}(s,t)Q(t)x\|^p
               = \int_{s}^{s+1} \|U_{Q}(s,t)Q(t)x\|^p d\tau \\
            & \leq M^p \int_{s}^{s+1}  \left(\dfrac{\mu'(\tau)}{\mu(\tau)}\right)^p
               \left(\dfrac{\mu(\tau)}{\mu(s)}\right)^{p \omega}
               \mu(\tau)^{p \alpha} \|U_{Q}(\tau,t)Q(t)x\|^p d\tau \\
            & \leq M^p K_\mu^{p-1}\mu(s+1)^{p \alpha}\\
              &\quad \times\int_{s}^{s+1} \dfrac{\mu'(\tau)}{\mu(\tau)} \left(\dfrac{\mu(\tau)}{\mu(s)}\right)^{p \omega}
                \left(\dfrac{\mu(t)}{\mu(\tau)}\right)^{-p\gamma} \left(\dfrac{\mu(t)}{\mu(\tau)}\right)^{p \gamma} \|U_{Q}(\tau,t)Q(t)x\|^p d\tau  \\
            & \leq M^p K_\mu^{p-1}e^{K_{\mu}\alpha p}\mu(s)^{p \alpha} \left(\dfrac{\mu(t)}{\mu(s)}\right)^{-p \gamma}\\
              &\quad \times\int_{s}^{s+1} \dfrac{\mu'(\tau)}{\mu(\tau)} \left(\dfrac{\mu(\tau)}{\mu(s)}\right)^{p(\omega+\gamma)}
               \left(\dfrac{\mu(t)}{\mu(\tau)}\right)^{p \gamma} \|U_{Q}(\tau,t)Q(t)x\|^p d\tau  \\
            & \leq M^p K_\mu^{p-1} e^{K_\mu(\alpha+\omega+\gamma)p} \mu(s)^{p \alpha} \left(\dfrac{\mu(t)}{\mu(s)}\right)^{-p \gamma}\\
              &\quad \times\int_{0}^{t} \dfrac{\mu'(\tau)}{\mu(\tau)}
               \left(\dfrac{\mu(t)}{\mu(\tau)}\right)^{p \gamma} \|U_{Q}(\tau,t)Q(t)x\|^p d\tau  \\
            & \leq D M^p K_\mu^{p-1} e^{K_\mu(\alpha+\omega+\gamma)p}
               \left(\dfrac{\mu(t)}{\mu(s)}\right)^{-p(\gamma+\alpha)} \mu(t)^{p(\varepsilon+\alpha)} \| x\|^{p}
         \end{split}
      \end{equation}
      and, for $s \leq t < s+1$ we get
      \begin{equation}\label{eq:maj-4}
         \begin{split}
            \|U_{Q}(s,t)Q(t)x\|
            & \leq M K_\mu e^{K_\mu(\omega + \gamma +\alpha)}
               \left(\dfrac{\mu(t)}{\mu(s)}\right)^{-(\gamma+\alpha)}
               \mu(t)^{\varepsilon+\alpha} \|x\|.
         \end{split}
      \end{equation}

     From~\eqref{eq:maj-1}, \eqref{eq:maj-2}, \eqref{eq:maj-3} and \eqref{eq:maj-4} we conclude that the evolution operator $U$ has a nonuniform $\mu$-dichotomy with projection valued function $P$.
    \end{proof}

 In the particular case when $\mu(t)=e^t$, we recover Theorem 1 in \cite{Lu.Me.2014}:
  \begin{corollary}\label{cor.exponential}
 Assume  that  $U:\Delta\to\mathcal{B}(X)$ is an evolution operator and $P:\mathbb{R}^{+}_{0}\to\mathcal{B}(X)$ is a projection valued function compatible with $U$ such that there exist constants  $\omega>0$, $\alpha\geq 0$ and $M\geq 1$ with
      \begin{equation*}\label{eq:growth.exp}
      \| G(t,s) \|\leq M  e^{\alpha s} e^{\omega|t-s|}, \text{ for } t,s\geq 0, \, t\neq s.
      \end{equation*}
  If there exist $p,D \geq 1$,  $\gamma > \alpha$ and $\varepsilon\geq 0$ such that
       \begin{equation*}
         \int_0^{+\infty} e^{p \gamma| \tau-t|} \|G(\tau,t)x\|^p d\tau
         \leq D e^{p\varepsilon t} \| x\|^p,
      \end{equation*}
      for every $(t,x)\in \mathbb{R}^{+}_{0} \times X$, then $U$ has a nonuniform exponential dichotomy with  projection valued function $P$.
  \end{corollary}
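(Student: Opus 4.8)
The plan is to obtain this corollary as the special case $\mu(t)=e^{t}$ of Theorem~\ref{Th.Datko.s}. First I would verify that $\mu(t)=e^{t}$ is a differentiable growth rate: $\mu(0)=1$, $\mu$ is increasing, and $\mu(t)\to+\infty$ as $t\to+\infty$. Since $\mu'(t)/\mu(t)=1$ for every $t\ge 0$, the boundedness condition~\eqref{eq:bound} holds with $K_\mu=1$.

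Next I would rewrite the two hypotheses of the corollary in terms of $\mu$. For the growth bound, using $\mu'(s)/\mu(s)=1$, $\mu(t)/\mu(s)=e^{t-s}$ and $(t-s)\,\mathrm{sign}(t-s)=|t-s|$ (recall $t\neq s$), the assumed inequality $\|G(t,s)\|\le M e^{\alpha s}e^{\omega|t-s|}$ is precisely~\eqref{eq:growth} for $\mu(t)=e^{t}$. Likewise, $\mu'(\tau)/\mu(\tau)=1$, $\bigl(\mu(\tau)/\mu(t)\bigr)^{p\gamma\,\mathrm{sign}(\tau-t)}=e^{p\gamma|\tau-t|}$ and $\mu(t)^{p\varepsilon}=e^{p\varepsilon t}$, so the assumed integral estimate is exactly~\eqref{eq:1} in this case. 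Hence all the hypotheses of Theorem~\ref{Th.Datko.s} are satisfied with the same $p\ge 1$, $\gamma>\alpha$, $\varepsilon\ge 0$, $D>0$, $\omega>0$, $\alpha\ge 0$, $M\ge 1$, and we conclude that $U$ has a nonuniform $\mu$-dichotomy with projection valued function $P$.

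Finally I would translate the conclusion back. A nonuniform $\mu$-dichotomy with $\mu(t)=e^{t}$ asserts the existence of constants $a,b>0$, $\varepsilon\ge 0$, $N_1,N_2\ge 1$ with $\|U(t,s)P(s)\|\le N_1 e^{-a(t-s)}e^{\varepsilon s}$ and $\|U_{Q}(s,t)Q(t)\|\le N_2 e^{-b(t-s)}e^{\varepsilon t}$ for all $t\ge s\ge 0$, which is exactly a nonuniform exponential dichotomy in the sense of Barreira--Valls, as noted after Definition~\ref{d.dich}. Since every step is a direct substitution, there is no real obstacle: the only point that needs a little care is matching the exponents in~\eqref{eq:growth} and~\eqref{eq:1} with the exponential expressions in the corollary's hypotheses and checking that the constraint $\gamma>\alpha$ carries over verbatim.
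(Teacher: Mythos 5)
Your proof is correct and follows precisely the same route as the paper, which presents Corollary~\ref{cor.exponential} as the direct specialization of Theorem~\ref{Th.Datko.s} to $\mu(t)=e^{t}$. The substitutions you perform (checking $\mu'(t)/\mu(t)=1$ so that \eqref{eq:bound} holds, and matching \eqref{eq:growth} and \eqref{eq:1} to the exponential hypotheses) are exactly the intended verification.
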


A similar result to the one above can be obtained in the case of nonuniform polynomial dichotomy:
  \begin{corollary}\label{cor.polynomial}
  We assume that $U:\Delta\to\mathcal{B}(X)$ is an evolution operator and $P:\mathbb{R}^{+}_{0}\to\mathcal{B}(X)$ is a projection valued function compatible with $U$ such that there exist constants $\omega>0$, $\alpha\geq 0$ and $M\geq 1$ with
  \begin{equation*}\label{eq:growth.pol}
      \| G(t,s) \|\leq M \dfrac{1}{s+1}
      \left(\dfrac{t+1}{s+1}\right)^{\omega \, \mathrm{sign} (t-s)} (s+1)^\alpha, \text{ for } t,s\geq 0, \, t\neq s.
  \end{equation*}
  If there exist $p,D \geq 1$,  $\gamma > \alpha$ and $\varepsilon\geq 0$  such that
       \begin{equation*}
         \int_0^{+\infty} \dfrac{1}{\tau+1} \left(\dfrac{\tau+1}{t+1}\right)^{p \gamma  \, \mathrm{sign}(\tau-t)} \|G(\tau,t)x\|^p d\tau
         \leq D (t+1)^{p \varepsilon} \| x\|^p,
      \end{equation*}
      for every $(t,x)\in \mathbb{ R}^{+}_{0} \times X$, then $U$ has a nonuniform polynomial dichotomy with projection valued function $P$.
  \end{corollary}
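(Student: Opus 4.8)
The plan is to obtain the statement as an immediate specialization of Theorem~\ref{Th.Datko.s} to the growth rate $\mu(t)=t+1$. First I would check that $\mu(t)=t+1$ qualifies as a differentiable growth rate in the sense of Definition~\ref{d.rate}: it is increasing and differentiable on $\mathbb{R}^{+}_{0}$, it satisfies $\mu(0)=1$, and $\mu(t)\to+\infty$ as $t\to+\infty$. Moreover, since $\mu'(t)=1$, the quantity in~\eqref{eq:bound} is $K_\mu=\sup_{t\geq 0}\frac{1}{t+1}=1<+\infty$, so the standing assumption of Theorem~\ref{Th.Datko.s} is met.

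Next I would observe that, with this choice of $\mu$, the hypotheses of the corollary are precisely those of Theorem~\ref{Th.Datko.s} rewritten. Indeed, $\frac{\mu'(s)}{\mu(s)}=\frac{1}{s+1}$, $\frac{\mu(t)}{\mu(s)}=\frac{t+1}{s+1}$ and $\mu(s)^\alpha=(s+1)^\alpha$, so the bound assumed on $\|G(t,s)\|$ is exactly~\eqref{eq:growth}; similarly $\frac{\mu'(\tau)}{\mu(\tau)}=\frac{1}{\tau+1}$, $\frac{\mu(\tau)}{\mu(t)}=\frac{\tau+1}{t+1}$ and $\mu(t)^{p\varepsilon}=(t+1)^{p\varepsilon}$, so the assumed integral inequality is exactly~\eqref{eq:1}. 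Since the parameters $p\geq 1$, $\gamma>\alpha$, $\varepsilon\geq 0$, $D>0$ are as required, Theorem~\ref{Th.Datko.s} applies and yields that $U$ has a nonuniform $\mu$-dichotomy with projection valued function $P$ for $\mu(t)=t+1$.

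Finally, I would invoke the identification recorded in the discussion following Definition~\ref{d.dich}: a nonuniform $\mu$-dichotomy with $\mu(t)=t+1$ is, by definition, a nonuniform polynomial dichotomy (in the sense of \cite{Ba.Va.2009,Be.Si.2009,Be.Si.2012}). This completes the argument. There is essentially no obstacle here; the only point requiring attention is to confirm that $\mu(t)=t+1$ satisfies all the structural requirements imposed in Theorem~\ref{Th.Datko.s}, namely that it is a differentiable growth rate with $K_\mu<+\infty$, which is immediate.
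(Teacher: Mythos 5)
Your proposal is correct and is precisely the argument the paper intends: Corollary~\ref{cor.polynomial} is obtained by specializing Theorem~\ref{Th.Datko.s} to the differentiable growth rate $\mu(t)=t+1$, after noting that $K_\mu=\sup_{t\ge 0}\frac{1}{t+1}=1<+\infty$ and that the hypotheses of the corollary are exactly \eqref{eq:growth} and \eqref{eq:1} under this substitution. The identification of nonuniform $\mu$-dichotomy with $\mu(t)=t+1$ as nonuniform polynomial dichotomy is the one recorded after Definition~\ref{d.dich}, so nothing further is needed.
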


In the following example we consider an evolution operator that has a nonuniform $\mu$-dichotomy for a given growth rate $\mu$, different from both exponential and polynomial functions.
\begin{example}\label{ex2}
Let $\mu(t)=t+\sqrt{t^2+1}$, $t\geq 0$. Given $a,b>1$ and $\alpha\geq 0$ with $\alpha+1<\min\{a,b\}$,  consider the evolution operator $U:\Delta\to\mathcal{B}(\mathbb{R}^2)$, $U(t,s)(x_1,x_2)=\left(U_1(t,s)x_1,U_2(t,s)x_2\right),$ where
\begin{align*}
U_1(t,s)x_1&=\frac{\mu'(s)}{\mu'(t)}\left(\frac{\mu(t)}{\mu(s)}\right)^{- a} e^{\alpha \sin^2 s\log \mu(s)-\alpha \sin^2 t\log \mu(t)}x_1,\\
U_2(t,s)x_2&=\frac{\mu'(s)}{\mu'(t)}\left(\frac{\mu(s)}{\mu(t)}\right)^{- b} e^{\alpha \sin^2 t\log \mu(t)-\alpha \sin^2 s\log \mu(s)}x_2.
\end{align*}
Obviously $\mu$ is a differentiable growth rate with
\begin{equation}\label{eq:ex}
\mu'(t)=1+\frac{t}{\sqrt{t^2+1}}\geq 1 \;\text{ and }\; \frac{\mu'(t)}{\mu(t)}=\frac{1}{\sqrt{t^2+1}}\leq 1, \text{ for all } t\geq 0.
\end{equation}
Using Theorem \ref{Th.Datko.s}, we will prove that $U$ has a nonuniform $\mu$-dichotomy with the projection valued function $P(t)(x_1,x_2)=(x_1,0)$. Indeed, by \eqref{eq:ex} we have that
$$\|G(t,s)\|\leq \frac{\mu'(s)}{\mu(s)}\left(\frac{\mu(t)}{\mu(s)}\right)^{\omega\,\mathrm{sign}(t-s)}\mu(s)^{\alpha+1}, \, t,s\geq 0, \, t\neq s,$$
for each $\omega>0$.

Furthermore, proceeding in a similar manner to the proof of Theorem  \ref{thm:dicho=>ine_int}, we obtain
$$\int_0^{+\infty} \dfrac{\mu'(\tau)}{\mu(\tau)}
            \left(\dfrac{\mu(\tau)}{\mu(t)}\right)^{\gamma  \, \mathrm{sign}(\tau-t)} \|G(\tau,t)x\| d\tau
         \leq D\mu(t)^{\alpha+1} \| x\|, \;(t,x)\in \mathbb{R}^{+}_{0}\times \mathbb{R}^2,$$
for $\gamma\in (\alpha+1,\min\{a,b\})$ and $D=\max\left\{1,\dfrac{1}{a-\gamma}+\dfrac{1}{b-\gamma}\right\}$, which shows that $U$ has a nonuniform $\mu$-dichotomy
with  projection valued function $P$.
\end{example}

\medskip

We now discuss the relation between Lyapunov functions and nonuniform $\mu$-dichotomies.

Given a differentiable growth rate $\mu : \mathbb{R}^{+}_{0} \to [1,+\infty)$, a projection valued function $P:\mathbb{R}^{+}_{0}\to\mathcal{B}(X)$ and constants $\gamma > 0$ and $p \ge 1$, we denote by $\mathcal{H}_{\gamma,p}^{\mu}(P)$ the set of all strongly continuous operator-valued functions
      $$ H : \mathbb{R}^{+}_{0} \to \mathcal{B}(X)$$
   such that
   \begin{equation}\label{eq:Ly}
       \|H(t)x\|
         \leq \left(\dfrac{\mu'(t)}{\mu(t)}\right)^{1/p} \mu(t)^\gamma \|P(t) x\|
            + \left(\dfrac{\mu'(t)}{\mu(t)}\right)^{1/p} \mu(t)^{-\gamma} \|Q(t) x\|,
   \end{equation}
   for every $ t \geq 0$ and every $x \in X$.

Next, we define Lyapunov functions adapted to our situation.
Given an evolution operator $U:\Delta\to\mathcal{B}(X)$, a projection valued function $P:\mathbb{R}^{+}_{0}\to\mathcal{B}(X)$ compatible with $U$, a growth rate $\mu: \mathbb{R}^{+}_{0} \to [1,+\infty)$ and a constant $\gamma>0$, we say that a function $L : \mathbb{R}^{+}_{0} \times X \to \mathbb{R}$ is a \emph{Lyapunov function for $U$ with projection valued function $P$, growth rate $\mu$ and exponent $\gamma$} if there are constants $p, D \geq 1$ and $\varepsilon \geq 0$  such that for all strongly continuous operator valued functions $H \in \mathcal H_{\gamma,p}^\mu(P)$ and all $(t,s,x)\in\Delta\times X$, we have
      \begin{enumerate}
         \item[\rm(i)] $L(t, U(t,s)x) + \displaystyle\int_s^t \|H(\tau) U(\tau,s) x\|^p d\tau \leq L(s,x);$
         \item[\rm(ii)] $L(t,P(t)x) \geq 0$ and $L(t,Q(t) x) \leq 0$;
         \item[\rm(iii)] $\mu(t)^{-p\gamma} L(t,P(t) x)- \mu(t)^{p\gamma} L(t,Q(t) x)\leq 2^{p-1} D \mu(t)^{p\varepsilon} \|x\|^p$.
      \end{enumerate}

\begin{theorem}\label{th:Ly.n}
      Let $\mu : \mathbb{R}^{+}_{0} \to [1,+\infty)$ be a differentiable growth rate. If the evolution operator $U:\Delta\to\mathcal{B}(X)$ has a nonuniform $\mu$-dichotomy with a projection valued function $P:\mathbb{R}^{+}_{0}\to\mathcal{B}(X)$, then for every positive constant $\gamma < \min\{a,b\}$, there is a Lyapunov function for $U$ with projection valued function $P$, growth rate $\mu$ and exponent $\gamma$.
\end{theorem}

 \begin{proof}
 Let $p\geq 1$ and $H \in \mathcal H_{\gamma,p}^\mu(P)$. We define
 $$ L(t,x) := 2^{p-1}\left( \int_t^{+\infty} \|H(\tau) U(\tau,t) P(t) x\|^p d\tau
               -  \int_0^t \|H(r) U_{Q}(\tau,t)Q(t)x\|^p d\tau\right).$$
 We have
 \begin{equation}\label{eq:A}
 \begin{split}
  L(t, U(t,s)x) & + \int_s^t \|H(\tau) U(\tau,s) x\|^p d\tau\\
 &=2^{p-1} \int_t^{+\infty} \|H(\tau) U(\tau,s) P(s) x\|^p d\tau\\
 & \quad - 2^{p-1} \int_0^t \|H(\tau) U_{Q}(\tau,t)Q(t)U(t,s)x\|^p d\tau\\
 & \quad  +\int_s^t \|H(\tau) U(\tau,s) x\|^p d\tau.
 \end{split}
 \end{equation}
 We first compute
 \begin{align*}
 &\int_0^t \|H(\tau) U_{Q}(\tau,t)Q(t)U(t,s)x\|^p d\tau\\
 &= \int_0^s \|H(\tau) U_{Q}(\tau,t)Q(t)U(t,s)x\|^p d\tau + \int_s^t \|H(\tau) U_{Q}(\tau,t)Q(t)U(t,s)x\|^p d\tau\\
 &= \int_0^s \|H(\tau) U_{Q}(\tau,s)Q(s)x\|^p d\tau + \int_s^t \|H(\tau) U(\tau,s)Q(s)x\|^p d\tau.
 \end{align*}
On the other hand, using the inequality
$$\| x+y\|^p\leq 2^{p-1}\|x\|^p+ 2^{p-1}\|y\|^p, \text{ for } x,y\in X,$$
it follows that
 \begin{equation}\label{eq:B}
 \begin{split}
&\int_s^t \|H(\tau) U(\tau,s) x\|^p d\tau\\
&=\int_s^t \|H(\tau) U(\tau,s)P(s) x + H(\tau) U(\tau,s)Q(s) x\|^p d\tau\\
&\leq 2^{p-1} \int_s^{t} \|H(\tau) U(\tau,s) P(s) x\|^p d\tau+2^{p-1} \int_s^{t} \|H(\tau) U(\tau,s) Q(s) x\|^p d\tau.
 \end{split}
 \end{equation}
Now,   by~\eqref{eq:A} and~\eqref{eq:B} we have
\begin{align*}
&L(t, U(t,s)x)  + \int_s^t \|H(\tau) U(\tau,s) x\|^p d\tau\\
&\leq 2^{p-1} \int_s^{+\infty} \|H(\tau) U(\tau,s) P(s) x\|^p d\tau
               - 2^{p-1} \int_0^s \|H(\tau) U_{Q}(\tau,s)Q(s)x\|^p d\tau \\
&= L(s,x),
\end{align*}
for all $(t,s,x)\in\Delta\times X$.
Clearly
         $ L(t,P(t) x) \geq 0$      and
         $ L(t,Q(t) x) \leq 0.$

Moreover, by Theorem~\ref{thm:dicho=>ine_int} we deduce that

     \begin{align*}
         \mu(t)^{-p\gamma} & L(t,P(t) x)- \mu(t)^{p\gamma} L(t,Q(t) x)\\
         & = 2^{p-1} \mu(t)^{-p\gamma}\int_t^{+\infty} \|H(\tau) U(\tau,t) P(t) x\|^p  d\tau\\
           &\quad + 2^{p-1} \mu(t)^{p\gamma} \int_0^t \|H(\tau) U_{Q}(\tau,t)Q(t)x\|^p d\tau\\
         & \leq 2^{p-1}  \int_0^{+\infty}
            \dfrac{\mu'(\tau)}{\mu(\tau)} \left(\dfrac{\mu(\tau)}{\mu(t)}\right)^{p \gamma \,\mathrm{sign}(\tau-t)} \|G(\tau,t)  x\|^p d\tau\\
         & \leq 2^{p-1} D \mu(t)^{p\varepsilon} \| x\|^p,
      \end{align*}
for all  $(t,x)\in \mathbb{R}^{+}_{0}\times X$, where $D=\max\left\{1,\dfrac{N_1^p}{p(a-\gamma)}+\cfrac{N_2^p}{p(b-\gamma)}\right\}$ and $\varepsilon\geq 0$ is given  by Definition \ref{d.dich}.  Therefore, $L$ is a Lyapunov function for $U$ with projection valued function $P$, growth rate $\mu$ and exponent $\gamma$.
\end{proof}

The next theorem establishes the existence of nonuniform $\mu$-dichotomies assuming the existence of suitable Lyapunov functions and thus is a converse of Theorem~\ref{th:Ly.n}.

\begin{theorem}\label{th:Ly.s}
      Let $\mu : \mathbb{R}^{+}_{0} \to [1,+\infty)$  be a differentiable growth rate that satisfies \eqref{eq:bound}.  Assume  that $U:\Delta\to\mathcal{B}(X)$ is an evolution operator and $P:\mathbb{R}^{+}_{0}\to\mathcal{B}(X)$ is a projection valued function compatible with $U$ such that \eqref{eq:growth} holds for some $\alpha \geq 0$. If there is a Lyapunov function for $U$ with projection valued function $P$, growth rate $\mu$ and exponent $\gamma > \alpha$, then $U$ has a nonuniform $\mu$-dichotomy with projection valued function $P$.
\end{theorem}

   \begin{proof} Let $L : \mathbb{R}^{+}_{0} \times X \to \mathbb{R}$ be a Lyapunov function for $U$ with projection valued function $P$, growth rate $\mu$ and exponent $\gamma > \alpha$. Then there are constants $p, D \geq 1$ and $\varepsilon \geq 0$ such that relations (i)--(iii) from the definition of the Lyapunov functions hold
   for each $H \in \mathcal H_{\gamma,p}^\mu(P)$.
   Consider
         $$ H(t) x
         = \left(\dfrac{\mu'(t)}{\mu(t)}\right)^{1/p} \mu(t)^\gamma P(t) x
            + \left(\dfrac{\mu'(t)}{\mu(t)}\right)^{1/p} \mu(t)^{-\gamma} Q(t) x.$$
      It is easy to see that $H \in \mathcal{H}_{\gamma,p}^{\mu}(P)$. Thus, by (i) and (ii) mentioned above, we have
      \begin{align*}
         \int_t^u
            \dfrac{\mu'(\tau)}{\mu(\tau)} &\left(\dfrac{\mu(\tau)}{\mu(t)}\right)^{p\gamma} \|U(\tau,t) P(t) x\|^p  d\tau\\
         & = \mu(t)^{-p \gamma} \int_t^u \|H(\tau) U(\tau,t) P(t) x\|^p  d\tau\\
         & \leq \mu(t)^{-p \gamma} \left[L(t,P(t)x) - L(u,U(u,t) P(t) x)\right]\\
         & \leq \mu(t)^{-p \gamma} L(t,P(t)x),
      \end{align*}
   for every  $u\geq t$, which implies
   \begin{equation}\label{eq:Ly1}
         \int_t^{+\infty}
            \dfrac{\mu'(\tau)}{\mu(\tau)} \left(\dfrac{\mu(\tau)}{\mu(t)}\right)^{p\gamma} \|U(\tau,t) P(t) x\|^p  d\tau
            \leq \mu(t)^{-p \gamma} L(t,P(t)x).
   \end{equation}
   On the other hand, we get
   \begin{equation}\label{eq:Ly2}
      \begin{split}
         \int_0^t
            \dfrac{\mu'(\tau)}{\mu(\tau)} &\left(\dfrac{\mu(t)}{\mu(\tau)}\right)^{p\gamma} \|U_{Q}(\tau,t) Q(t) x\|^p  d\tau\\
            & = \mu(t)^{p \gamma} \int_0^t \|H(\tau) U_{Q}(\tau,t) Q(t) x\|^p  d\tau\\
         & \leq \mu(t)^{p \gamma} \left[L(0,U_{Q}(0,t)Q(t)x) - L(t, Q(t) x)\right]\\
         & \leq \mu(t)^{p \gamma} |L(t,Q(t)x)|.
      \end{split}
   \end{equation}
   By \eqref{eq:Ly1},  \eqref{eq:Ly2} and using (iii), it follows
   \begin{align*}
   \int_0^{+\infty}
            \dfrac{\mu'(\tau)}{\mu(\tau)} & \left(\dfrac{\mu(\tau)}{\mu(t)}\right)^{p\gamma\,\mathrm{sign}(\tau-t)} \|G(\tau,t)  x\|^p  d\tau\\
   & \leq \mu(t)^{-p \gamma} L(t,P(t)x)-\mu(t)^{p \gamma} L(t,Q(t)x)\\
   & \leq 2^{p-1} D \mu(t)^{p\varepsilon} \|x\|^p.
   \end{align*}
   Thus, by Theorem \ref{Th.Datko.s} we deduce that $U$ has a nonuniform $\mu$-dichotomy with projection valued function $P$.
   \end{proof}

If $X$ is a Hilbert space, we obtain the following result:

\begin{corollary}
Let $\mu : \mathbb{R}^{+}_{0} \to [1,+\infty)$  be a differentiable growth rate that satisfies \eqref{eq:bound}.  Assume that $U:\Delta\to\mathcal{B}(X)$ is an evolution operator on a Hilbert space $X$ and $P:\mathbb{R}^{+}_{0}\to\mathcal{B}(X)$ is a projection valued function compatible with $U$ such that \eqref{eq:growth} holds for some $\alpha\geq 0$.
If for some $\gamma > \alpha$  there is a strongly continuous operator valued function $W : \mathbb{R}^{+}_{0}  \to \mathcal{B}(X)$ with $W(t)^*=W(t)$ such that, for all strongly continuous operator valued functions $H \in \mathcal H_{\gamma,2}^\mu(P)$ and all $(t,s,x)\in\Delta\times X$, we have
\begin{equation*}
\langle U(t,s)^{*}W(t)U(t,s)x+\int_{s}^{t}U(\tau,s)^{*}H(\tau)^{*}H(\tau)U(\tau,s)x\;d\tau,x\rangle
\leq \langle W(s)x,x\rangle,
\end{equation*}
$$\langle W(t)P(t)x,P(t)x\rangle\geq 0,\quad  \langle W(t)Q(t)x,Q(t)x\rangle\leq 0,$$
and
\begin{align*}
\mu(t)^{- 2\gamma}\langle W(t)P(t)x,P(t)x\rangle  -\mu(t)^{2\gamma}\langle W(t)Q(t)x,&Q(t)x\rangle
&\leq  D \mu(t)^{2\varepsilon} \|x\|^2,
\end{align*}
for some  $D\geq 1$ and $\varepsilon\geq 0$, then $U$ has a nonuniform $\mu$-dichotomy with projection valued function $P$.
\end{corollary}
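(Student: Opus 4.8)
The plan is to reduce the statement to Theorem~\ref{th:Ly.s} with $p=2$ by turning the self-adjoint operator valued function $W$ into a quadratic Lyapunov function. Concretely, given $H\in\mathcal{H}_{\gamma,2}^{\mu}(P)$ and the associated $W$, I would set
$$L(t,x)=\langle W(t)x,x\rangle,\qquad (t,x)\in\mathbb{R}^{+}_{0}\times X.$$
Since $W(t)=W(t)^{*}$, this quantity is real, so $L:\mathbb{R}^{+}_{0}\times X\to\mathbb{R}$ is well defined, and it remains to verify that $L$ satisfies conditions {\rm(i)}--{\rm(iii)} of Theorem~\ref{th:Ly.n} for $p=2$; then Theorem~\ref{th:Ly.s} applies and yields the nonuniform $\mu$-dichotomy.

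For {\rm(i)} I would use the identities $\langle W(t)U(t,s)x,U(t,s)x\rangle=\langle U(t,s)^{*}W(t)U(t,s)x,x\rangle$ and, for every $\tau$, $\|H(\tau)U(\tau,s)x\|^{2}=\langle U(\tau,s)^{*}H(\tau)^{*}H(\tau)U(\tau,s)x,x\rangle$. Pulling $\langle\,\cdot\,,x\rangle$ out of the integral over $[s,t]$ by linearity and continuity of the inner product, the left-hand side of {\rm(i)} becomes exactly
$$\Big\langle U(t,s)^{*}W(t)U(t,s)x+\int_{s}^{t}U(\tau,s)^{*}H(\tau)^{*}H(\tau)U(\tau,s)x\,d\tau,\;x\Big\rangle,$$
which by the first hypothesis on $W$ is $\le\langle W(s)x,x\rangle=L(s,x)$. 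For {\rm(ii)}, since $P(t)^{2}=P(t)$ and $Q(t)^{2}=Q(t)$, we get $L(t,P(t)x)=\langle W(t)P(t)x,P(t)x\rangle\ge 0$ and $L(t,Q(t)x)=\langle W(t)Q(t)x,Q(t)x\rangle\le 0$ directly from the sign conditions on $W$. For {\rm(iii)} with $p=2$ the bound demanded by Theorem~\ref{th:Ly.n} is $\mu(t)^{-2\gamma}L(t,P(t)x)-\mu(t)^{2\gamma}L(t,Q(t)x)\le 2\,D\,\mu(t)^{2\varepsilon}\|x\|^{2}$, whereas the third hypothesis on $W$ gives the stronger estimate with $D$ in place of $2D$; since $D\le 2D$, condition {\rm(iii)} holds with the same $\varepsilon$ and $D$.

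Having checked {\rm(i)}--{\rm(iii)} for every $H\in\mathcal{H}_{\gamma,2}^{\mu}(P)$, I would invoke Theorem~\ref{th:Ly.s} with $p=2$ and the same $\gamma>\alpha$, $\varepsilon\ge 0$, $D>0$ to conclude that $U$ has a nonuniform $\mu$-dichotomy with projection valued function $P$. I do not expect a genuine obstacle: the argument is just the identification $L(t,x)=\langle W(t)x,x\rangle$ together with the standard identities $\|Bx\|^{2}=\langle B^{*}Bx,x\rangle$ and $\langle U^{*}AUx,x\rangle=\langle AUx,Ux\rangle$. The only minor point to watch is the interchange of the inner product with the (weak or Bochner) integral in the energy term, which is legitimate because the integrand in the hypothesis is assumed integrable and $\langle\,\cdot\,,x\rangle$ is a bounded linear functional; the strong continuity of $W$ assumed in the statement is not needed in the present reduction.
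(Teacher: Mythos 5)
Your reduction is exactly the paper's proof: the paper sets $L(t,x)=\langle W(t)x,x\rangle$ and invokes Theorem~\ref{th:Ly.s} with $p=2$, which is precisely your argument (the paper simply leaves the verification of (i)--(iii) implicit). Your more detailed checks, including the observation that $D\le 2^{p-1}D$ so the constant matches condition (iii), are correct and fill in what the paper omits.
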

\begin{proof}
Set
$$L(t,x)=\langle W(t)x,x\rangle,\text{ for }(t,x)\in \mathbb{R}^{+}_{0}\times X. $$
It is easy to see that $L$ is a Lyapunov function for $U$ with projection valued function $P$, growth rate $\mu$ and exponent $\gamma$. Thus, by Theorem \ref{th:Ly.s} it follows that $U$ has a nonuniform $\mu$-dichotomy with projection valued function $P$.
\end{proof}

\section*{Acknowledgments}
The work of A. Bento and C. Silva was partially supported by FCT though Centro de Ma\-te\-m\'a\-ti\-ca e Aplica\c{c}\~oes da Universidade da Beira Interior (project PEst-OE/MAT/UI0212/2013).
The work of N. Lupa has been supported  by a grant of the Romanian National Authority for Scientific Research, CNCS-UEFISCDI, project number PN-II-ID-JRP-2011-2.

\def\cprime{$'$}

\end{document}